\title{The finitistic dimension of a triangulated category}
\author{Henning Krause}
\address{Fakult\"at f\"ur Mathematik\\
Universit\"at Bielefeld\\ D-33501 Bielefeld\\ Germany}
\email{hkrause@math.uni-bielefeld.de}
\theoremstyle{plain}
\newtheorem{thm}{Theorem}[section]
\newtheorem{prop}[thm]{Proposition}
\newtheorem{lem}[thm]{Lemma}
\theoremstyle{definition}
\newtheorem{defn}[thm]{Definition}
\newtheorem{exm}[thm]{Example}
\theoremstyle{remark}
\newtheorem{rem}[thm]{Remark}
\numberwithin{equation}{thm}
\newcommand{\add}{\operatorname{add}}
\newcommand{\amp}{\operatorname{amp}}
\newcommand{\cone}{\operatorname{cone}}
\renewcommand{\dim}{\operatorname{dim}}
\newcommand{\End}{\operatorname{End}}
\newcommand{\Ext}{\operatorname{Ext}}
\newcommand{\findim}{\operatorname{fin{.}dim}}
\newcommand{\Findim}{\operatorname{Fin{.}dim}}
\newcommand{\gldim}{\operatorname{gl{.}dim}}
\newcommand{\Hom}{\operatorname{Hom}}
\newcommand{\id}{\operatorname{id}}
\newcommand{\idim}{\operatorname{inj{.}dim}}
\newcommand{\Inj}{\operatorname{Inj}}
\newcommand{\Mod}{\operatorname{Mod}}
\newcommand{\pdim}{\operatorname{proj{.}dim}}
\newcommand{\Perf}{\operatorname{Perf}}
\newcommand{\rad}{\operatorname{rad}}
\newcommand{\RHom}{\operatorname{RHom}}
\newcommand{\thick}{\operatorname{thick}}
\newcommand{\Ab}{\mathrm{Ab}}
\newcommand{\op}{\mathrm{op}}
\newcommand{\perf}{\mathrm{perf}}
\newcommand{\iso}{\xrightarrow{\raisebox{-.4ex}[0ex][0ex]{$\scriptstyle{\sim}$}}}
\newcommand{\lto}{\longrightarrow}
\newcommand*{\intref}[2]{\def\tmp{#1}\ifx\tmp\empty\hyperref[#2]{\ref*{#2}}\else\hyperref[#2]{#1~\ref*{#2}}\fi}
\def\A{\mathcal A} 
\def\C{\mathcal C}
\def\P{\mathcal P}
\def\Q{\mathcal Q}
\def\T{\mathcal T}
\def\X{\mathcal X}
\def\bfD{\mathbf D}
\def\bbZ{\mathbb Z}
\def\p{\phi}
\def\Si{\Sigma}
\begin{document}

\keywords{Finitistic dimension, triangulated category}

\subjclass[2020]{18G80 (primary), 16E10 (secondary)}

\begin{abstract}
The finitistic dimension of a triangulated category is introduced. For
the category of perfect complexes over a ring it is shown that this
dimension is finite if and only if the small finitistic dimension of
the ring is finite.
\end{abstract}

\date{\today}

\maketitle

\section{Introduction}

The finitistic dimension of a ring is a homological invariant which is
conjectured to be finite for any finite dimensional algebra over a
field \cite{Ba1960}. Despite many efforts the conjecture remains
open. So it seems appropriate to take a step back in order to
understand this homological invariant beyond the specific context of
modules over a finite dimensional algebra. This is precisely a
situation where triangulated categories turn out to be useful, because
they enable the transfer of concepts.

Let us consider the category of perfect complexes over a noetherian
scheme. A recent result of Neeman characterises the existence of
bounded t-structures \cite{Ne2022}, confirming a conjecture of
Antieau, Gepner, and Heller \cite{AGH2019}. This requires the scheme
to be of finite dimension, and an analysis of the proof reveals that
there is an analogue of this theorem in a non-commutative setting. It
turns out that the condition on the scheme to be of finite dimension
translates into the finiteness of the finitistic dimension. Thus a
unified proof requires a definition of finitistic dimension for
triangulated categories, and this is precisely what this paper
offers. That there is such a concept is not surprising, because it is
well known that the finiteness of the finitistic dimension of a ring
is a derived invariant, even though the integer values of this
dimension in a derived equivalence class may differ \cite{PX2009}. 

The purpose of this note is to describe the finiteness of the
finitistic dimension of a ring as a property of the triangulated
category of perfect complexes. To this end we introduce for any
triangulated category its finitistic dimension. This is in the spirit
of Rouquier's dimension \cite{Ro2008}; for a wide class of rings its
finiteness characterises regularity. Note that we focus on the small
finitistic dimension. The big finitistic dimension is discussed in an
appendix, where it is shown that its finiteness is an invariant of the
category of perfect complexes. This requires that the ring is an Artin
algebra and it is based on recent work of Rickard \cite{Ri2019}.

For another definition of finitistic dimension for triangulated
categories we refer to \cite{BCMPZ2024}; this is much closer to what
is needed for the existence of bounded t-structures.

\section{The finitistic dimension of a triangulated category}

Let $\T$ be a triangulated category with suspension $\Si\colon\T
\iso\T$. From \cite{BV2003,Ro2008} we recall the following
definition. For an object $X$ in $\T$ and $n\ge 0$ set
\[\thick^n(X):=\begin{cases}
  \add\varnothing&n=0,\\
  \add\{\Si^i X\mid i\in\bbZ\}&n=1,\\
  \add\{\cone\p\mid\p\in\Hom(\thick^1(X), \thick^{n-1}(X))&n>1,
\end{cases}\]
where $\add\X$ denotes the smallest full subcategory containing $\X$
that is closed under finite direct sums and direct summands. Also, set
\[\thick(X):=\bigcup_{n\ge
    0}\thick^n(X).\] The triangulated category $\T$ is
\emph{finitely generated}  when $\T=\thick(X)$ for some object $X$, and $\T$ is  \emph{strongly
  finitely generated} if its \emph{dimension}
\[\dim\T:=\inf\{n\ge 0\mid \exists X\in\T\colon\thick^{n+1}(X)=\T\}\]  
is finite.

We continue with some further notation. For a pair of objects $X,Y$ in
$\T$ and $n\ge 0$ we write  $h(X,Y)\le n$  when for any pair $i,j\in\bbZ$
\[\Hom(X,\Si^iY)\neq 0\neq \Hom(X,\Si^jY) \;\;\implies\;\; |i-j|< n.\]
We set \[\hom^n(X):=\{Y\in\T\mid h(X,Y) \le n\}.\]
The \emph{amplitude} of $X$ is
\[\amp(X):=\sup\{|n|\ge 0\mid\Hom(X,\Si^n X)\neq 0\}.\]

We record some elementary properties of $\hom^n(X)$.

\begin{lem}\label{le:hom-basic}
    \pushQED{\qed}
    For $X\in\T$ and $n\ge 0$ we have
\begin{enumerate}
\item $\hom^0(X)=\{Y\in\T\mid \Hom(X,\Si^iY)=0\text{ for all }i\in\bbZ\}$,      
\item $\hom^1(X)=\{Y\in\T\mid \Hom(X,\Si^iY)\neq 0\text{ for at most one }i\in\bbZ\}$,      
\item $\hom^n(X)=\hom^{n}(\Si^i X)$ for each $i\in\bbZ$,   
\item $\hom^n(X)=\hom^{n+|i|}(X\oplus\Si^i X)$ for each $i\in\bbZ$,
\item $\hom^n(X)\subseteq\hom^n(Y)$ for each $Y\in\add X$,
\item $\hom^n(X)\subseteq\hom^n(Y)$ for each exact triangle $Y'\to Y\to
  Y''\to\Si Y'$ with $Y',Y''\in\add X$.
  \end{enumerate}
\end{lem}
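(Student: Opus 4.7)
The plan is to unfold the definition of $h(X,Y) \le n$ into a statement about the support set
\[
  S(X,Y) := \{i \in \bbZ \mid \Hom(X, \Si^i Y) \neq 0\} \subseteq \bbZ:
\]
namely, $h(X,Y) \le n$ holds if and only if $S(X,Y)$ is empty or has diameter $\max S(X,Y) - \min S(X,Y) \le n-1$. With this rephrasing, each of the six clauses reduces to an elementary fact about subsets of $\bbZ$ combined with one standard manipulation of $\Hom$ in $\T$. The main (and essentially only) conceptual step is identifying this reformulation; nothing afterwards requires serious work, which is presumably why the statement is marked \qedhere in the source.

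Clauses (1) and (2) are direct translations: $h(X,Y) \le 0$ forces $S(X,Y) = \emptyset$, since any $i \in S(X,Y)$ would give $|i - i| = 0 \not< 0$; and $h(X,Y) \le 1$ forces $|S(X,Y)| \le 1$. For (3), the identity $\Hom(\Si^i X, \Si^j Y) \cong \Hom(X, \Si^{j-i} Y)$ yields $S(\Si^i X, Y) = S(X, Y) + i$, and translation preserves both emptiness and diameter. For (4), the same identity gives $S(X \oplus \Si^i X, Y) = S(X, Y) \cup (S(X, Y) + i)$; a short case analysis on the sign of $i$ shows that this union is empty precisely when $S(X,Y)$ is, and otherwise its diameter equals the diameter of $S(X,Y)$ plus $|i|$. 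This is exactly the shift from the bound $n-1$ to the bound $n + |i| - 1$ that the statement requires, and it gives equality of the two subcategories.

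Clauses (5) and (6) share a common mechanism: one verifies the set-theoretic inclusion $S(Y, Z) \subseteq S(X, Z)$ for all $Z \in \T$, after which diameters can only shrink. In (5), if $Y \in \add X$ then $Y$ is a direct summand of some $X^m$, which makes $\Hom(Y, \Si^i Z)$ a summand of $\Hom(X, \Si^i Z)^m$, giving the required inclusion. In (6), applying $\Hom(-, \Si^i Z)$ to the triangle $Y'\to Y\to Y''\to\Si Y'$ yields an exact sequence
\[
  \Hom(Y'', \Si^i Z) \lto \Hom(Y, \Si^i Z) \lto \Hom(Y', \Si^i Z),
\]
whence $S(Y, Z) \subseteq S(Y', Z) \cup S(Y'', Z)$, and each term on the right lies in $S(X, Z)$ by (5) applied to $Y', Y'' \in \add X$. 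In both cases $Z \in \hom^n(X)$ then forces $Z \in \hom^n(Y)$.
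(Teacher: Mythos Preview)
Your proof is correct; the paper omits any argument for this lemma (the \verb|\qedhere| marks it as elementary), and your unfolding via the support set $S(X,Y)$ is precisely the natural verification the author leaves to the reader.
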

\begin{proof}
  We give some hints and fix objects $Y,Z\in\T$.  The assertions (1)--(3) are
  clear. For (4) one uses that
  \[h(X,Z)\le n\;\;\iff\;\; h(X\oplus \Si^i X,Z)\le n+|i|.\] For (5)
  and (6) consider $Y\in \add X$ or an exact triangle $Y'\to Y\to
  Y''\to\Si Y'$ with $Y',Y''\in\add X$. Then one has
  \[\Hom(Y,Z)\neq 0\;\;\implies\;\;\Hom(X,Z)\neq 0\]
  and therefore
  \[h(X,Z)\le n\;\;\implies\;\;h(Y,Z)\le n.\qedhere\]
\end{proof}

\begin{lem}\label{le:hom-thick}
  Let $Y\in\thick(X)$. For each $p\ge 0$ there are inclusions
  \[\thick^p(Y)\subseteq \thick^q(X) \quad\text{and}\quad
    \hom^p(X)\subseteq \hom^q(Y) \quad\text{for}\quad q\gg 0.\]
\end{lem}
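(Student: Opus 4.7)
The plan is to handle the two inclusions separately. Fix $m \ge 0$ with $Y \in \thick^m(X)$.

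\emph{First inclusion.} By induction on $p$ we show $\thick^p(Y) \subseteq \thick^{pm}(X)$. For $p = 1$, since $\thick^m(X)$ is closed under shifts, finite direct sums, and summands, it contains $\thick^1(Y) = \add\{\Si^i Y \mid i \in \bbZ\}$. For $p \ge 2$, an object of $\thick^p(Y)$ is a summand of $\cone(\phi)$ for some $\phi \colon A \to B$ with $A \in \thick^1(Y) \subseteq \thick^m(X)$ and $B \in \thick^{p-1}(Y) \subseteq \thick^{(p-1)m}(X)$ (by induction). Rotating the defining triangle to $B \to \cone(\phi) \to \Si A \to \Si B$ presents $\cone(\phi)$ as the middle term of a triangle with outer terms in $\thick^{(p-1)m}(X)$ and $\thick^m(X)$. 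The standard concatenation $\thick^a(X) \diamond \thick^b(X) \subseteq \thick^{a+b}(X)$ from \cite{BV2003,Ro2008} then yields $\cone(\phi) \in \thick^{pm}(X)$, so $q = pm$ suffices.

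\emph{Second inclusion.} The plan is to prove the stronger claim that for every $V \in \thick^m(X)$ there is a finite set $\Delta_V \subseteq \bbZ$ such that, for all $W \in \T$,
\[
\{j \in \bbZ \mid \Hom(V, \Si^j W) \neq 0\} \subseteq \{j \in \bbZ \mid \Hom(X, \Si^j W) \neq 0\} + \Delta_V.
\]
Granting this, if $W \in \hom^p(X)$ then the right-hand side has width at most $p - 1 + d_V$ with $d_V := \max \Delta_V - \min \Delta_V$, so $W \in \hom^{p + d_V}(V)$; setting $V = Y$ yields $q = p + d_Y$.

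The claim is proved by induction on $m$. For $m = 1$, $V$ is a summand of some $\bigoplus_k \Si^{i_k} X$, and the decomposition $\Hom(\bigoplus_k \Si^{i_k} X, \Si^j W) = \bigoplus_k \Hom(X, \Si^{j - i_k} W)$ lets us take $\Delta_V = \{i_1, \dots, i_r\}$. For $m > 1$, $V$ is a summand of a finite direct sum of cones $C_\ell = \cone(\phi_\ell)$ with $\phi_\ell \colon A_\ell \to B_\ell$, $A_\ell \in \thick^1(X)$, and $B_\ell \in \thick^{m-1}(X)$, for which $\Delta_{A_\ell}$ and $\Delta_{B_\ell}$ are furnished by induction. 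Applying $\Hom(-, \Si^j W)$ to the triangle $A_\ell \to B_\ell \to C_\ell \to \Si A_\ell$ yields the exact fragment $\Hom(A_\ell, \Si^{j-1} W) \to \Hom(C_\ell, \Si^j W) \to \Hom(B_\ell, \Si^j W)$, so any $j$ with $\Hom(C_\ell, \Si^j W) \neq 0$ satisfies $\Hom(A_\ell, \Si^{j-1} W) \neq 0$ or $\Hom(B_\ell, \Si^j W) \neq 0$. Setting $\Delta_{C_\ell} := (\Delta_{A_\ell} + 1) \cup \Delta_{B_\ell}$ and $\Delta_V := \bigcup_\ell \Delta_{C_\ell}$ completes the induction.

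The principal obstacle lies in the second inclusion: a naive induction tracking only widths of $\Hom(V, \Si^\bullet W)$ fails, because the long exact sequence combines supports whose relative positions depend on $W$ and are not bounded by widths alone. The refinement to finite offset sets $\Delta_V$---recording the location of the support of $\Hom(V, \Si^\bullet W)$ relative to that of $\Hom(X, \Si^\bullet W)$---is what makes the induction close.
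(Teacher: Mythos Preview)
Your proof is correct and follows the same overall strategy as the paper: both inclusions are handled by induction on the number of steps needed to build $Y$ from $X$. The paper's own proof is a two-sentence sketch --- it asserts $\thick^p(Y)\subseteq\thick^{p+n}(X)$ for $Y\in\thick^n(X)$ and says the second inclusion ``follows from the previous lemma by an induction on the number $n$ of steps needed to build $Y$ from $X$'' --- whereas you supply full details. Two remarks: (i) your bound $\thick^p(Y)\subseteq\thick^{pm}(X)$ is the one the direct argument actually yields, while the paper's stated $p+n$ appears to be imprecise (though harmless, since only existence of some $q$ is needed); (ii) your offset-set device $\Delta_V$ is precisely the bookkeeping required to make the paper's appeal to Lemma~\ref{le:hom-basic} rigorous, since as you correctly observe, a naive induction tracking only the widths $h(V,W)$ does not close.
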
  
\begin{proof}
  Let  $Y\in\thick^n(X)$. Then  $\thick^p(Y)\subseteq \thick^{p\cdot n}(X)$. 
The second inclusion follows from the previous lemma by an induction on the
number $n$ of steps needed to build $Y$ from $X$.
\end{proof}

The following definition provides a finiteness condition for
triangulated categories which lies in between `finitely generated' and
`strongly finitely generated'.

Recall from \cite{Or2006} that an object $X$ is \emph{homologically
  finite} if it satisfies
$h(X,Y)<\infty$ for each object $Y$.

\begin{defn}
  An object $X$ of a triangulated category $\T$ is a \emph{finitistic
    generator} of $\T$ if $X$ is homologically finite
  and
  \[\hom^p(X)\subseteq\thick^p(X)\quad\text{for all}\quad p\ge 0.\]
  The \emph{finitistic dimension}\footnote{While the term `finitistic
    dimension' goes back to Bass \cite{Ba1960}, the term `finitistic'
    is often attributed to Hilbert's Programme.  Hilbert intended to
    formalise mathematics based on the use of finite methods. This was
    meant as a contribution towards the foundations of mathematics,
    but his syzygy theorem (about invariants, and in modern language
    about modules of finite projective dimension) may well be
    considered as an early instance for this line of thought. In
    fact, Hilbert mentions `the use or the knowledge of a syzygy' as a
    model for the notion of simplicity when he formulated his
    unpublished 24th problem that asks for `criteria of simplicity, or
    proof of the greatest simplicity of certain proofs'.  For
    Hilbert's 24th problem we refer to \cite{MR2019,Th2005}, and for an excellent
    exposition of Hilbert's finitism, see \cite{Ta2013}.} of $\T$ is
\[\findim\T:=\inf\{\amp(X)\mid X \text{ is a finitistic generator of
  }\T\}.\]
As usual, $\findim\T=\infty$ if no finitistic generator exists.
  \end{defn}

\begin{rem}\label{re:bounded}
  (1) If an object $X$ is homologically finite, then
  $\T=\bigcup_{p\ge 0}\hom^p(X)$. Thus a finitistic generator $X$ of
  $\T$ satisfies $\T=\thick(X)$.
  
(2) If $\T$ admits a finitistic generator, then all objects in $\T$ are
homologically finite.

(3) If $\thick^{n+1}(X)=\T$ for some $X\in\T$, then
$\hom^p(Y)\subseteq\thick^{p}(Y)$ for $Y=X\oplus\Si^n X$ and all
$p\ge 0$.  Thus strong finite generation implies that there is
a finitistic generator, provided that all objects are homologically
finite.
\end{rem}

\section{The small finitistic dimension of a ring}

Let $A$ be a ring and let $\P(A)$ denote the class of $A$-modules $M$
that admit a finite resolution
\[0\lto P_n\lto \cdots \lto P_1 \lto P_0\lto M\lto 0\]
with all $P_i$ finitely generated projective. We denote  
\[\findim A:=\sup\{\pdim M\mid M\in\P(A)\}\]
the \emph{small finitistic dimension} of $A$; this is a slight
variation of the original definition in \cite{Ba1960} which takes the
supremum of $\pdim M$ where $M$ runs through all finitely generated
$A$-modules having finite projective dimension. Our definition seems
natural as the modules in $\P(A)$ are precisely the ones which become
compact (or perfect) when viewed as an object in the derived category
of $A$. Note that both definitions coincide when the ring $A$ is noetherian.

We are now in the position to characterise $\findim A<\infty$ in terms
of the category of perfect complexes. Let $\Perf(A)$ denote the
category of perfect complexes over $A$. 

\begin{thm}\label{th:main}
  For a ring $A$ we have
  \[\findim A<\infty\;\;\iff\;\;\findim\Perf(A)<\infty.\]
More precisely,  let $X$ be a finitistic generator of $\Perf(A)$. Then
there are  $p,q\ge 0$ such that
\[\hom^1(A)\subseteq\hom^p(X)\quad\text{and}\quad\thick^1(X)\subseteq\thick^q(A),\]
which implies
\[\findim \Perf(A)\le\findim A<p\cdot q.\]
\end{thm}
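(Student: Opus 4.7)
For the implication $\findim A<\infty\Rightarrow\findim\Perf(A)\le\findim A$, set $d:=\findim A$ and take $X:=\bigoplus_{i=0}^d\Si^i A$. Since $\thick^1(X)=\add\{\Si^i A\}$, we have $\thick^n(X)=\thick^n(A)$ for every $n$. The computation $\Hom(X,\Si^k Y)=\bigoplus_{i=0}^d H^{k-i}(Y)$ shows that $Y\in\hom^n(X)$ iff the cohomology of $Y$ lies in an interval $[a,b]$ with $b-a\le n-d-1$. The crucial input is a shortening lemma: any perfect $Y$ with cohomology in $[a,b]$ admits a representation as a complex of finitely generated projectives of length at most $b-a+1+d$. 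Above degree $b$, vanishing of $H^s$ splits the surjection $P^{s-1}\twoheadrightarrow P^s$ and lets one drop the top; at the bottom, the quotient complex with $C:=P^a/\mathrm{im}(d^{a-1})$ in degree $a$ is quasi-isomorphic to $Y$, and $C\in\P(A)$ with $\pdim C\le d$, so replacing $C$ by a projective resolution of length $d+1$ yields the stated bound. Applied to $Y\in\hom^n(X)$ this gives $Y\in\thick^n(A)=\thick^n(X)$, so $X$ is homologically finite and a finitistic generator with $\amp(X)=d$, proving $\findim\Perf(A)\le d$.

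For the converse, let $X$ be a finitistic generator. Remark~\ref{re:bounded}(1) combined with $\Perf(A)=\thick(A)$ gives $X\in\thick^q(A)$ for some $q$, so $\thick^1(X)\subseteq\thick^q(A)$; applying Lemma~\ref{le:hom-thick} to $X\in\thick(A)$ yields $p$ with $\hom^1(A)\subseteq\hom^p(X)$. Any $M\in\P(A)$, viewed as a perfect complex concentrated in degree zero, satisfies $M\in\hom^1(A)\subseteq\hom^p(X)\subseteq\thick^p(X)$ by the finitistic-generator property. The standard inclusion $\thick^a*\thick^b\subseteq\thick^{a+b}$, iterated with $\thick^1(X)\subseteq\thick^q(A)$, then places $M\in\thick^{pq}(A)$ and gives $\pdim M<\infty$, whence $\findim A<\infty$.

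The main obstacle is sharpening this estimate to obtain $\findim A<p+q$ rather than the naive $\pdim M\le pq-1$. The improvement should exploit the cohomological concentration of $M$ in a single degree together with a specific $q$-term projective representation of $X$: in the iterated cone description of $M\in\thick^p(X)$, the requirement $H^i(M)=0$ for $i\ne 0$ rigidly constrains the shifts appearing in the $p$ successive filtration layers, forcing the length-$q$ projective representations of these layers to overlap rather than stack. Combining this cancellation with a shortening argument analogous to the one used in the forward direction should consolidate the construction into a projective representative of $M$ of length at most $p+q$, yielding $\pdim M\le p+q-1$ and the claimed bound. The principal subtlety is the precise combinatorics aligning the cone shifts with the $q$-term resolutions so as to produce an additive rather than multiplicative exponent.
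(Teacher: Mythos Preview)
Your argument for the equivalence $\findim A<\infty\iff\findim\Perf(A)<\infty$ is correct and matches the paper's. For the forward direction the paper uses $A'=A\oplus\Si^d A$ rather than your $\bigoplus_{i=0}^d\Si^iA$, but by Lemma~\ref{le:hom-basic}(4) these yield the same $\hom^n$ and $\thick^n$, and the paper's induction $\hom^p(A)\subseteq\thick^{p+d}(A)$ is your shortening lemma recast. For the converse both you and the paper run the chain $\hom^1(A)\subseteq\hom^p(X)\subseteq\thick^p(X)\subseteq\thick^n(A)$ and read off $\findim A<n$ via Lemma~\ref{le:pdim}.

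Where you diverge is the quantitative bound $\findim A<p+q$. The paper does \emph{not} argue as in your third paragraph. In the proof of Lemma~\ref{le:hom-thick} it simply asserts that $Y\in\thick^n(X)$ implies $\thick^p(Y)\subseteq\thick^{p+n}(X)$; applied with the roles of $X$ and $A$ interchanged and $n=q$, this is what would give $\thick^p(X)\subseteq\thick^{p+q}(A)$ and hence $\findim A<p+q$. Your instinct that the routine estimate here is multiplicative, yielding only $\thick^{pq}(A)$, is the standard one, and the paper offers no further justification for the additive form. So your speculative cancellation scheme is aimed at a step the paper states without argument; for the headline equivalence your proof is already complete, and the discrepancy over $p+q$ versus $pq$ reflects an unelaborated assertion in the paper rather than a gap on your side.
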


We will use the following well
known fact, which is a consequence of the ghost lemma (cf.\
\cite[Lemma~2.4]{KK2006}).

\begin{lem}\label{le:pdim}
\pushQED{\qed}  For $M\in\P(A)$ and $n > 0$ we have 
  \[\pdim M< n\;\;\iff\;\; M\in\thick^{n}(A)\subseteq\perf(A).\qedhere\]
\end{lem}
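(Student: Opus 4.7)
The plan is to prove the two implications separately: the forward direction by exhibiting $M$ as an iterated cone built from a finite projective resolution, and the reverse direction by invoking the ghost lemma.

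For the direction $\pdim M < n \Rightarrow M \in \thick^n(A)$, I would choose a resolution $0 \to P_{n-1} \to \cdots \to P_0 \to M \to 0$ with each $P_i$ finitely generated projective (padding with zero modules if necessary), and regard this as a complex $P^\bullet$ concentrated in cohomological degrees $-(n-1),\ldots,0$ that is quasi-isomorphic to $M$. I would filter $P^\bullet$ from below by the subcomplexes $Y_k := \sigma^{\le -k} P^\bullet$, so that $Y_{n-1} = \Sigma^{n-1} P_{n-1} \in \thick^1(A)$ and $Y_0 \simeq M$. Each short exact sequence of complexes $0 \to Y_{k+1} \to Y_k \to \Sigma^k P_k \to 0$ rotates to a triangle exhibiting $Y_k \simeq \cone(\Sigma^{k-1} P_k \to Y_{k+1})$; since $\Sigma^{k-1} P_k \in \thick^1(A)$, a downward induction on $k$ using the defining formula for $\thick^{m+1}$ as cones of morphisms from $\thick^1$ into $\thick^m$ yields $Y_k \in \thick^{n-k}(A)$, and hence $M = Y_0 \in \thick^n(A)$.

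For the converse $M \in \thick^n(A) \cap \P(A) \Rightarrow \pdim M < n$, I would argue by contradiction. Suppose $\pdim M = m \ge n$, and let $\Omega^k M$ denote the $k$-th syzygy in a minimal projective resolution. Each short exact sequence $0 \to \Omega^{k+1} M \to P_k \to \Omega^k M \to 0$ yields in $D(A)$ a connecting morphism $\delta_k : \Omega^k M \to \Sigma \Omega^{k+1} M$, and every $\delta_k$ is an $A$-ghost because source and target are modules sitting in distinct cohomological degrees, so $H^i(\delta_k) = 0$ for all $i \in \bbZ$. The $n$-fold composition
\[M \xrightarrow{\delta_0} \Sigma \Omega M \xrightarrow{\Sigma\delta_1} \Sigma^2 \Omega^2 M \to \cdots \xrightarrow{\Sigma^{n-1}\delta_{n-1}} \Sigma^n \Omega^n M\]
represents the Yoneda class in $\Ext^n(M, \Omega^n M)$ of the truncated resolution $0 \to \Omega^n M \to P_{n-1} \to \cdots \to P_0 \to M \to 0$, and this class is nonzero: its vanishing would split the rightmost short exact sequence $0 \to \Omega^n M \to P_{n-1} \to \Omega^{n-1} M \to 0$, forcing $\Omega^{n-1} M$ to be projective and $\pdim M < n$, against our hypothesis. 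This nonzero composition of $n$ ghosts from $M$ contradicts the ghost lemma, which asserts that every composition of $n$ ghosts starting at an object of $\thick^n(A)$ must vanish.

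The main obstacle I anticipate is the asymmetry in the paper's definition of $\thick^n$: the source of the defining cone must sit in $\thick^1$ while the target sits in $\thick^{n-1}$. This dictates the bottom-up filtration of the resolution in the forward step, since the naive top-down brutal truncation would place the cone data in the wrong slots. A secondary point is to align the indexing of the ghost lemma in \cite{KK2006} with the present normalisation $\thick^1(A) = \add\{\Sigma^i A\}$, so that the $n$-ghost-annihilation statement matches $\thick^n$ exactly; this is pure bookkeeping.
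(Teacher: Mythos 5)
Your proof is correct and is exactly the argument the paper has in mind: the lemma is stated there without proof as ``a consequence of the ghost lemma'' of \cite{KK2006}, which is precisely the tool you deploy for the direction $M\in\thick^n(A)\Rightarrow\pdim M<n$, while the forward direction is the standard bottom-up cone filtration of a finite projective resolution, with the indexing handled correctly. One minor point: the word ``minimal'' in your choice of resolution should be dropped, since minimal projective resolutions need not exist over a general ring and your splitting argument works verbatim for any finite resolution by finitely generated projectives.
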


\begin{proof}[Proof of Theorem~\ref{th:main}]
The crucial observation is that taking the cohomology of a complex
identifies the objects in $\hom^1(A)$  with the modules in $\P(A)$.
  
Suppose first that $d=\findim A<\infty$. We show by induction on $p\ge 0$ that
  $\hom^p(A)\subseteq\thick^{p+d}(A)$. The case $p=0$ is clear and we
  may assume $p>0$. Let $X\in \hom^p(A)$ and
  suppose its cohomology \[H^n(X)=\Hom(A,\Si^nX)\] is concentrated in
  degrees $0,\ldots,p-1$. Then we may assume $X^n=0$ for $n\ge p$. For $p=1$
the complex $X$ identifies with $H^0(X)\in\P(A)$
and then Lemma~\ref{le:pdim} implies $X\in\thick^{d+1}(A)$. For $p>1$
we may write $X$ as an extension of the truncation
\[X'\colon\quad\cdots \lto X^{p-4}\lto X^{p-3}\lto X^{p-2}\lto
  0\lto\cdots\] and a complex concentrated in
degree $p-1$. We have \[X'\in \hom^{p-1}(A)\subseteq\thick^{p-1+d}(A)\] and therefore
$X\in\thick^{p+d}(A)$. Now set $A'=A\oplus\Si^dA$. Then we have 
\[\hom^{p+d}(A')=\hom^p(A)\subseteq\thick^{p+d}(A)=\thick^{p+d}(A')\]
for $p\ge 0$ and $\hom^d(A')=0$, using Lemma~\ref{le:hom-basic}.
Thus $A'$ is a finitistic generator and $\findim \Perf(A)\le\findim A$
follows since $\amp(A')=d$. 

Suppose now that $X$ is a finitistic generator of $\Perf(A)$.  Using
Lemma~\ref{le:hom-thick} we find  $p,q\ge 0$ such that
\[\hom^1(A)\subseteq\hom^p(X)\qquad\text{and}\qquad\thick^1(X)\subseteq
  \thick^q(A).\] 
Then
\[\hom^1(A)\subseteq\hom^p(X)\subseteq \thick^p(X)\subseteq
  \thick^{p\cdot q}(A).\] 
Thus $\findim A<p\cdot q$ by Lemma~\ref{le:pdim}.
\end{proof}

 A pair of rings $A,B$ is by definition \emph{derived equivalent} if
 there is a triangle equivalence $\Perf(A)\iso\Perf(B)$.
 
\begin{rem}
 It is an immediate consequence of Theorem~\ref{th:main} that for any
 pair of derived equivalent rings $A,B$ we have that $\findim
 A<\infty$ if and only if $\findim B<\infty$. For coherent rings this
 result is due to Pan and Xi \cite{PX2009}.
 \end{rem}

\begin{rem}
  There are examples of rings such that $\findim A=\infty$ and
  $\findim A^\op=0$; cf.\ \cite{Kr2022}.  We have
  $\Perf(A^\op)\simeq\Perf(A)^\op$, and therefore finiteness of 
  finitistic dimension is not a symmetric notion for triangulated categories.
\end{rem}

The following remark collects some known facts about the existence of
strong and finitistic generators of $\Perf(A)$. We give references, and
full proofs are provided in Appendix~\ref{se:fingen}.

\begin{rem}
  Let $A$ be a noetherian ring. Strong finite generation of $\Perf(A)$
  holds if and only if the global dimension is finite. Clearly, this
  implies that $A$ is \emph{regular}, so each finitely generated
  $A$-module has finite projective dimension
  \cite[Proposition~7.25]{Ro2008}. In general (and despite such claims
  in the literature), the converse is not true even when $A$ is
  commutative, because there are regular rings having infinite
  finitistic dimension \cite[Appendix, Example~1]{Na1962}, so
  \[ A \text{ regular} \quad\;\not\!\!\!\!\implies \quad\gldim A<\infty.\]
    However
  when $A$ is in addition semilocal, then $A$ is regular if and only
  if $A$ has finite global dimension \cite[Corollary~2]{XF1994}.  This
  provides many examples where $\Perf(A)$ has a finitistic generator
  but no strong generator, because one may take for $A$ any finite
  dimensional algebra over a field that has infinite global dimension
  and finite finitistic dimension (keeping in mind the
  question/conjecture from \cite{Ba1960} that for finite dimensional
  algebras the finitistic dimension is always finite).
\end{rem}

\begin{exm}
  Let $\T$ be an idempotent complete algebraic triangulated
  category. If $\findim\T=0$, then there exists a ring $A$ with
  $\findim A=0$ and a triangle equivalence $\T\iso \Perf(A)$.
\end{exm}
\begin{proof}
  A generator $X$ of $\T$ with $\amp(X)=0$ is a tilting object. Thus
  for $A=\End(X)$ the functor $\RHom(X,-)$ provides an equivalence
  $\T\iso \Perf(A)$; see
  \cite[Proposition~9.1.20]{Kr2023}. Moreover,
  $\hom^1(A)\subseteq\thick^1(A)$ implies $\findim A=0$ by
  Lemma~\ref{le:pdim}.
\end{proof}

\begin{appendix}
\section{Strong finite generation}\label{se:fingen}

In this appendix we record with proofs some facts that are well known,
but only to those knowing them well. Let $\A$ be an abelian category with
a projective generator $P$. Thus any object $X\in\A$ admits an
epimorphism $P^n\to X$ for some positive integer $n$. Let $\P$ denote
the full exact subcategory of projective objects in $\A$. We consider
the derived categories of bounded complexes and the inclusion
$\P\to\A$ induces a triangle equivalence
\[\bfD^{\mathrm b}(\P)\iso\thick(P)\subseteq\bfD^{\mathrm b}(\A)\]
which we view as an identification.

\begin{prop}
  The following holds for $\bfD^{\mathrm b}(\P)\subseteq\bfD^{\mathrm b}(\A)$.
  \begin{enumerate}
  \item $\bfD^{\mathrm b}(\P)=\bfD^{\mathrm b}(\A)$ $\iff$  $\{X\in\A\mid\pdim X<\infty\}=\A$.
  \item $\dim\bfD^{\mathrm b}(\P)<\infty$ $\iff$ $\gldim\A<\infty$.
\end{enumerate}
\end{prop}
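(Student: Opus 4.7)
For (1), assume first that every $X\in\A$ has $\pdim X<\infty$. Since $P$ is a projective generator, every projective of $\A$ is a direct summand of some $P^n$ and hence lies in $\add P\subseteq\thick^1(P)$. A finite projective resolution $0\to P_n\to\cdots\to P_0\to X\to 0$ exhibits $X$ as an iterated mapping cone of objects in $\thick^1(P)$, so $X\in\thick^{n+1}(P)\subseteq\bfD^{\mathrm b}(\P)$. Any $Y\in\bfD^{\mathrm b}(\A)$ is then built as an iterated extension of the shifts $H^i(Y)[-i]$ via the canonical truncation triangles, so $Y$ also lies in $\bfD^{\mathrm b}(\P)$. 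Conversely, if $\bfD^{\mathrm b}(\P)=\bfD^{\mathrm b}(\A)$ then every $X\in\A$ lies in some $\thick^n(P)$, and the ghost-lemma argument underlying Lemma~\ref{le:pdim}, transcribed to $\A$ with its projective generator $P$, yields $\pdim X<n$.

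For (2), the direction $\gldim\A<\infty\Rightarrow\dim\bfD^{\mathrm b}(\P)<\infty$ is the more accessible one. Setting $d=\gldim\A$, part (1) gives $\bfD^{\mathrm b}(\A)=\bfD^{\mathrm b}(\P)$, and the uniform bound $\pdim M\le d$ places every $M\in\A$ in $\thick^{d+1}(P)$. One concludes cleanly via the Morita-type equivalence $\A\simeq\mod B$ with $B:=\End_\A(P)^{\op}$, under which $\bfD^{\mathrm b}(\P)\simeq\Perf(B)$, and then invokes the classical Rouquier-type result that $\Perf(B)$ is strongly generated whenever $\gldim B<\infty$ \cite[Prop.~7.25]{Ro2008}; the uniform bound on projective resolutions is what supplies the single strong generator.

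For the converse, let $X$ be a generator with $\thick^{N+1}(X)=\bfD^{\mathrm b}(\P)$; after replacing $X$ by $X\oplus P$ we may assume $P\in\thick^1(X)$. Since $X\in\bfD^{\mathrm b}(\P)=\thick(P)$, say $X\in\thick^m(P)$, Lemma~\ref{le:hom-thick} yields $\bfD^{\mathrm b}(\P)=\thick^{N+1}(X)\subseteq\thick^{K}(P)$ for $K:=m(N+1)$. The ghost lemma then forces $\pdim M<K$ for every $M\in\A$ with $\pdim M<\infty$, so the \emph{finitistic} dimension of $\A$ is already bounded. The main obstacle is to upgrade this to a bound on the \emph{global} dimension by ruling out objects of infinite projective dimension, that is, by forcing $\bfD^{\mathrm b}(\P)=\bfD^{\mathrm b}(\A)$ so that (1) applies. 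For a hypothetical $M$ with $\pdim M=\infty$, the truncated projective resolutions $P^{(n)}=[P_n\to\cdots\to P_0]$ are perfect and hence belong to $\bfD^{\mathrm b}(\P)\subseteq\thick^K(P)$, and one seeks a contradiction via the triangles $\Omega^{n+1}M[n]\to P^{(n)}\to M\to$ by exhibiting compositions of $P$-ghost maps of length exceeding $K$ associated to $M$. The cleanest route is again to reduce through $\A\simeq\mod B$ and appeal to the known characterisation of the Rouquier dimension of $\Perf(B)$.
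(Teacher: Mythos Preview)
Your treatment of (1) and of the implication $\gldim\A<\infty\Rightarrow\dim\bfD^{\mathrm b}(\P)<\infty$ in (2) is fine and in line with the paper's argument (the paper simply cites \cite{Ch1998,KK2006} for the forward direction rather than passing through the Morita equivalence, but the content is the same).

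The converse of (2), however, is not actually proved in your proposal. You correctly reduce to $\thick(P)=\thick^K(P)$ and correctly isolate the obstacle: one must rule out objects $M\in\A$ with $\pdim M=\infty$. But from that point on you only sketch intentions (``one seeks a contradiction'', ``exhibiting compositions of $P$-ghost maps'') and then defer to ``the known characterisation of the Rouquier dimension of $\Perf(B)$''. That last appeal is either circular---the statement you want is exactly part~(2) for $\A=\mod B$---or inapplicable: the result you cite from \cite{Ro2008} concerns noetherian rings, and nothing guarantees that $B=\End_\A(P)^{\op}$ is noetherian. So there is a genuine gap.

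What is missing is the concrete construction the paper supplies. Given any $M\in\A$, take a projective resolution $Q\to M$ and, for $1\le i\le K$, consider the chain map $\p_i$ from the brutal truncation $[Q^{-n-i}\to\cdots\to Q^{-i+1}]$ to the brutal truncation $[Q^{-n-i-1}\to\cdots\to Q^{-i}]$ (here $n=K$) given by identity in the overlapping degrees and zero elsewhere. Each $\p_i$ is a map between objects of $\bfD^{\mathrm b}(\P)$ and satisfies $H^*(\p_i)=0$, hence is a $P$-ghost. By the ghost lemma, the composite $\p_K\circ\cdots\circ\p_1$ vanishes on $\thick^K(P)=\thick(P)$, so it is null-homotopic; unwinding the null-homotopy shows that $\Ker(Q^{-K}\to Q^{-K+1})$ is projective, whence $\pdim M<\infty$. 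The crucial point---and the reason your outline with the triangles $\Omega^{n+1}M[n]\to P^{(n)}\to M$ does not immediately work---is that the ghost maps must be arranged to have both source and target in $\bfD^{\mathrm b}(\P)$, since that is the only category in which the hypothesis $\thick^K(P)=\thick(P)$ gives control.
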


\begin{proof}
  (1) The equality $\bfD^{\mathrm b}(\P)=\bfD^{\mathrm b}(\A)$ holds if and only if each
  object from $\A$, viewed as a complex concentrated in degree zero,
  belongs to $\bfD^{\mathrm b}(\P)$. But an object from $\A$  belongs to $\bfD^{\mathrm b}(\P)$  precisely
  when it has finite projective dimension.
  
  (2) Suppose first that $\gldim\A<n$. Then $\thick^n(P)=\bfD^{\mathrm b}(\A)$
  by \cite[Theorem~8.3]{Ch1998} or \cite[Lemma~2.5]{KK2006}. For the
  converse suppose that $\thick(P)=\thick^n(P)$ for some integer
  $n$. We claim that $\gldim \A<n$. This follows from
  Lemma~\ref{le:pdim} once we know that each object  $X\in\A$ has finite
  projective dimension. To see this choose a projective resolution
  $Q\to X$  and for $1\le i\le n$ let $\p_i$ denote
  the following chain map between appropriate truncations of $Q$:
 \[
    \begin{tikzcd}[column sep=small]
      \cdots\arrow{r}&0
      \arrow{r}\arrow{d}&0\arrow{r}\arrow{d}&Q^{-n-i}\arrow{r}\arrow{d}{\id}&\cdots\arrow{r}&
      Q^{-i}\arrow{r}\arrow{d}{\id}&  Q^{-i+1}\arrow{r}\arrow{d}&0\arrow{r}\arrow{d}&\cdots\\
     \cdots\arrow{r}&0 \arrow{r}&Q^{-n-i-1}\arrow{r}&Q^{-n-i}\arrow{r}&\cdots\arrow{r}&
      Q^{-i}\arrow{r}&  0\arrow{r}&0\arrow{r}&\cdots
    \end{tikzcd}
\]
It is clear that $H^*(\p_i)=0$ for all $1\le i\le n$. Thus
$\Hom(-,\p_i)$ vanishes on $\thick^1(P)$, and therefore $\Hom(-,\p)$
vanishes on $\thick^n(P)$ for $\p=\p_n\circ\cdots\circ\p_1$. This
follows from the ghost lemma and implies that $\p$ is null-homotopic
since $\thick(P)=\thick^n(P)$; see \cite[Lemma~4.11]{Ro2008} or
\cite[Theorem~1]{Ke1965}. Then it follows that the
kernel of the differential $Q^{-n}\to Q^{-n+1}$ is projective. Thus $X$ has finite
projective dimension.
\end{proof}

One may ask for general criteria on $\A$ such that the conditions in
(1) and (2) are equivalent.  To indicate some connection we include
the following.

\begin{lem}
Suppose that every non-zero object in $\A$ has a
projective cover and a maximal subobject. Then we have
\[\gldim\A<\infty\quad\iff\quad\{X\in\A\mid\idim  X<\infty\}=\A.\]
\end{lem}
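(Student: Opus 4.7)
The forward direction is routine: if $\gldim\A\le n<\infty$, then $\Ext^{n+1}(X,Y)=0$ for all $X,Y\in\A$, whence $\idim Y\le n$ for every $Y$.

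For the converse, the plan is to combine the two hypotheses: projective covers give minimal projective resolutions, and the existence of maximal subobjects gives simple quotients of every non-zero object. Given $X\in\A$, choose a minimal projective resolution $\cdots\to Q_1\to Q_0\to X\to 0$, so that each syzygy $\Omega^{n+1}X=\Ker(Q_n\to Q_{n-1})$ is superfluous in $Q_n$ and hence contained in $\rad Q_n$. Assuming for contradiction that $\pdim X=\infty$, every $\Omega^nX$ is non-zero and therefore admits a surjection $\Omega^nX\twoheadrightarrow S_n$ onto a simple object.

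The next step is to verify that this surjection represents a non-zero class in $\Ext^n(X,S_n)$. By functoriality of the radical (as an intersection of maximal subobjects) and $\rad S_n=0$, any map $Q_{n-1}\to S_n$ factors through $Q_{n-1}/\rad Q_{n-1}$ and therefore vanishes on $\rad Q_{n-1}\supseteq\Omega^nX$. Hence the surjection does not extend along $\Omega^nX\hookrightarrow Q_{n-1}$, so it yields a non-zero element of $\Ext^n(X,S_n)$, and in particular $\idim S_n\ge n$.

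The main obstacle is to turn this unboundedness into a direct contradiction with the hypothesis that every object has finite injective dimension. The intended route is to show that the assumptions force only finitely many isomorphism classes of simple objects in $\A$ --- the abstract analogue of the ring-theoretic fact that a left perfect ring has semisimple quotient by its Jacobson radical, so that $P/\rad P$ is semisimple of finite length and its simple summands exhaust the simples of $\A$. Once there are only finitely many simples $S_1,\dots,S_k$, the number $N=\max_i\idim S_i$ is finite, which contradicts $\idim S_n\ge n$ for $n>N$. We conclude that $\pdim X\le N$ for every $X\in\A$, and hence $\gldim\A\le N<\infty$.
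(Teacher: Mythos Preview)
Your argument shares the paper's core mechanism: projective covers give minimal resolutions with syzygies contained in the radical, and simple quotients of those syzygies witness non-vanishing $\Ext$. The difference is in how the global bound is extracted. You argue by contradiction and need finitely many isomorphism classes of simples in order to set $N=\max_i\idim S_i$; you rightly flag this as ``the main obstacle'' and only sketch it via an analogy. The paper bypasses that step entirely. Identifying $\A$ with finitely presented $\End(P)$-modules, the hypotheses make $\End(P)$ semiperfect, so the \emph{single} object $S=P/\rad P$ is semisimple and every simple of $\A$ occurs as a summand. Your surjection $\Omega^n X\twoheadrightarrow S_n$ therefore composes to a non-zero map into $S$, and the same non-extension argument (any map $Q_{n-1}\to S$ kills $\rad Q_{n-1}\supseteq\Omega^nX$) gives $\Ext^n(X,S)\neq 0$ whenever $\Omega^nX\neq 0$. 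This yields the direct bound $\gldim\A\le\idim S$, with no contradiction and no counting of simples. In effect, the object $P/\rad P$ you invoke only to enumerate the simples is itself the test object you want.

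One small correction: the relevant ring-theoretic condition is \emph{semiperfect}, not left perfect. Left perfect is strictly stronger (it requires projective covers for \emph{all} modules, equivalently T-nilpotence of the radical) and is not implied by the hypotheses; semiperfect suffices for $P/\rad P$ to be semisimple.
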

\begin{proof}
One direction is clear and requires no assumption on $\A$. For the
converse it is convenient to identify $\A$ with the category of
finitely presented modules over $\End(P)$ via the functor
$X\mapsto\Hom(P,X)$.  The assumption on $\A$ implies that this ring 
is semiperfect, and therefore $S=P/\rad P$ is semisimple. We claim
that $X\in\A$ is projective if and only if $\Ext^1(X,S)=0$. To see
this choose a projective cover
\[\xi\colon\quad 0\lto\Omega(X)\lto P(X)\lto X\lto 0.\]
If $\Omega(X)\neq 0$, then there is a non-zero morphism
$\pi\colon \Omega(X)\to S$ such that $\pi\circ\xi\neq 0$ in
$\Ext^1(X,S)$ since $\Omega(X)\subseteq \rad P(X)$. From this
characterisation of projectivity it follows that $\gldim\A$ is bounded
by $\idim S$.
\end{proof}

\section{The big finitistic dimension}

In this appendix we show for an Artin algebra that the finiteness of
the big finitistic dimension is an invariant of the category of
perfect complexes, using some recent work of Rickard
\cite{Ri2019}. The argument is different from the one for the small
finitistic dimension, and also much less direct. We refer to
\cite{BCMPZ2024} for the notion of a big finitistic dimension for
compactly generated triangulated categories.

Let $\C$ be a category with an equivalence $\Si\colon\C\iso\C$. For a
class of objects $\X$ in $\C$ set
\[\X^+:=\{Y\in\C\mid \Hom(X,\Si^n Y)=0\text{ for all }X\in\X, n\ll 0\}\]
and analogously
\[\X^-:=\{Y\in\C\mid \Hom(X,\Si^n Y)=0\text{ for all }X\in\X, n\gg 0\}.\]
Also set $\X^{\mathrm b}:=\X^-\cap\X^+$ and
\[\X^\perp:=\{Y\in\C\mid \Hom(X,\Si^n Y)=0\text{ for all }X\in\X, n\in\bbZ\}.\]

Let $\T$ be an essentially small triangulated category with suspension
$\Si\colon\T\iso\T$. We write $\Mod\T$
for the category of $\T$-modules, i.e.\ additive functors
$\T^\op\to\Ab$. The suspension extends to an equivalence
$\Mod\T\iso\Mod\T$ via the assignment $X\mapsto X\circ \Si^-$. 
The full subcategory of injective objects in
$\Mod\T$ is denoted by $\Inj\T$.  

Let $A$ be an Artin algebra. We write $\Mod A$ for the category of
$A$-modules and consider its derived category. The category of perfect
complexes is viewed as a subcategory and we set
\[\P:=\Perf(A)\subseteq\bfD(\Mod A).\]
The Yoneda embedding $\P\to\Mod\P$ identifies $\P$ with a full
subcategory $\Q\subseteq \Mod\P$; it consists of injective modules by
Lemmas~\ref{le:res-yoneda} and \ref{le:pinj} below.

The \emph{big finitistic dimension} of $A$ is by definition
\[\Findim A:=\sup\{\pdim M\mid M\in\Mod A,\,\pdim M<\infty\}.\]

 \begin{thm}\label{th:app}
For an Artin algebra $A$ the following conditions are equivalent.   
\begin{enumerate}
\item The big finitistic dimension of $A$ is finite. 
\item In the derived category $\bfD(\Mod A)$ we have $((\P^{\mathrm b})^{\mathrm b})^\perp\cap\P^+=\{0\}$. 
\item In the category of injective $\Perf(A)$-modules we have $((\Q^{\mathrm b})^{\mathrm b})^\perp\cap\Q^+=\{0\}$. 
 \end{enumerate}
\end{thm}

The proof requires some preparations. Let $k$ be a commutative ring
and $A$ a $k$-algebra. We denote by $D=\Hom_k(-,E)$ the Matlis duality
given by a minimal injective cogenerator $E$ over $k$. The functor $D$
induces dualities
\[\Mod A\lto\Mod A^\op \qquad\text{and}\qquad \bfD(\Mod
  A)\lto\bfD(\Mod A^\op)\] such that for all $X$ in $\bfD(\Mod A)$ and
$Y$ in $\bfD(\Mod A^\op)$ there is an isomorphism
\[
  \Hom_{\bfD(A)}(X,DY)\cong\Hom_{\bfD(A^\op)}(Y,DX). \label{eq:dual}\tag{$\ast$}
\]

For any compactly generated triangulated category there is a notion of
\emph{purity} and in particular the notion of a \emph{pure-injective
  object}; see \cite[\S1.4]{Kr2000}. The following is basically all we
need to know.

\begin{lem}\label{le:res-yoneda}
  The restricted Yoneda functor
  \[\bfD(\Mod A)\lto \Mod\P,\quad X\mapsto \bar X:=\Hom_{\bfD(A)}(-,X)|_\P\] identifies
  the full subcategory of pure-injective objects of $\bfD(\Mod A)$
  with the category of injective objects of $\Mod\P$. 
\end{lem}
\begin{proof}
  See \cite[Corollary~1.9]{Kr2000}.
\end{proof}

\begin{lem}\label{le:pinj}
  Any complex of the form $DX$ is pure-injective.
\end{lem}
\begin{proof}
  We view $A$ as a differential graded algebra and have an isomorphism
  of functors
  \[\Hom_k(-\otimes_A X,E)\cong\Hom_A(-,\Hom_k(X,E))\]
  on the category of dg $A$-modules, which take pure exact sequences
  to exact sequences of abelian groups. Thus $DX$ is a pure-injective object in the
  category of dg $A$-modules, and therefore the summation morphism
  $\coprod_I DX\to DX$ factors through the canonical morphism
  $\coprod_I DX\to\prod_I DX$ for every set $I$. It follows that $DX$
  is a pure-injective object in $\bfD(\Mod A)$; see \cite[Theorem~1.8]{Kr2000}.
\end{proof}

Observe that in $\bfD(\Mod A)$ the objects in $\P^{\mathrm b}$ are the complexes
with bounded cohomology, whereas the objects in $(\P^{\mathrm b})^{\mathrm b}$ are (up to
isomorphism) the bounded complexes of injective $A$-modules. Thus
\[\P^{\mathrm b}=\bfD^{\mathrm b}(\Mod A)\qquad\text{and}\qquad (\P^{\mathrm b})^{\mathrm b}=\bfD^{\mathrm b}(\Inj
  A).\] We set $S=A/\rad A$ and note that $(\P^{\mathrm b})^{\mathrm b}= S^{\mathrm b}$ since any
$A$-module $M$ admits a finite filtration
$0=M_0\subseteq M_1\subseteq\cdots\subseteq M_n=M$ such that each
quotient $M_{i}/M_{i-1}$ is semisimple, so a direct summand of a
direct sum of copies of $S$. Similarly, we have $((\P^{\mathrm b})^{\mathrm b})^\perp= (DA)^\perp$.

\begin{lem}\label{le:Yoneda}
  The restricted Yoneda functor $\bfD(\Mod A)\to \Mod\P$ identifies
\begin{enumerate}
\item the pure-injective objects in $\P^+$ with the objects in
  $\Q^+\subseteq\Inj\P$,
  \item the pure-injective objects 
    in $(\P^{\mathrm b})^{\mathrm b}$ with the objects in $(\Q^{\mathrm b})^{\mathrm b}\subseteq\Inj\P$, 
  \item the pure-injective objects 
    in $((\P^{\mathrm b})^{\mathrm b})^\perp$ with the objects in $((\Q^{\mathrm b})^{\mathrm b})^\perp\subseteq\Inj\P$.
\end{enumerate}    
\end{lem}
\begin{proof}
  The restricted Yoneda functor $\bfD(\Mod A)\to \Mod\P$ identifies
  the full subcategory of pure-injective objects of $\bfD(\Mod A)$
  with the category of injective objects of $\Mod\P$ by
  Lemma~\ref{le:res-yoneda}. Now the first assertion follows
  since $\P\iso\Q\subseteq\Inj\P$. For the second assertion observe
  that $(\Q^{\mathrm b})^{\mathrm b}= \bar S^{\mathrm b}$, keeping in mind that $S$ is
  pure-injective (as an $A$-module, and therefore also as an object of
  $\bfD(\Mod A)$). Then the pure-injectives in $S^{\mathrm b}\subseteq \bfD(\Mod
  A)$ identify with the objects in $\bar S^{\mathrm b}\subseteq\Inj\P$.  For the
  third assertion one uses that $((\Q^{\mathrm b})^{\mathrm b})^\perp= \bar Q^\perp$
  for $Q=DA$. 
\end{proof}

\begin {proof}[Proof of Theorem~\ref{th:app}] The proof combines
  results from \cite{Ri2019} with Lemma~\ref{le:Yoneda}.
  
(1) $\Leftrightarrow$ (2) This is a reformulation of Theorem~4.4 in
\cite{Ri2019}.

(2) $\Rightarrow$ (3) This is clear since  $((\Q^{\mathrm b})^{\mathrm b})^\perp$ and $\Q^+$ in
$\Inj\P$ identify with full subcategories  of $((\P^b)^{\mathrm b})^\perp$ and $\P^+$ in
$\bfD(\Mod A)$ by Lemma~\ref{le:Yoneda}.

(3) $\Rightarrow$ (1) Suppose that the big finitistic dimension of $A$
is infinite. The proof of Proposition~5.2 in \cite{Ri2019} yields a
bounded above complex $X\neq 0$ of projective $A^\op$-modules satisfying
$\Hom_{\bfD(A^\op)}(X,\Si^n A)=0$ for all $n\in\bbZ$, and therefore
$\Hom_{\bfD(A)}(\Si^n DA,DX)=0$ for all $n\in\bbZ$, using the duality isomorphism \eqref{eq:dual}. Thus 
$\Hom_{\bfD(A)}(Q,DX)=0$ for all $Q\in(\P^{\mathrm b})^{\mathrm b}$, while $0\neq
DX\in\P^+$. It remains to note that $DX$ is pure-injective by Lemma~\ref{le:pinj}.
\end{proof}

\end{appendix}

\subsection*{Acknowledgement}

I am grateful to Janina Letz for useful comments and for sharing her
computations of the finitistic dimension in some interesting
examples. Additional thanks to Hongxing Chen for pointing out some
inaccuracies in a previous version.
This work was supported by the Deutsche
Forschungsgemeinschaft (SFB-TRR 358/1 2023 - 491392403).

\end{document}